\begin{document}
\title[\hfilneg  \hfil Blow-up of solutions for Emden-Fowler type wave equation]
{Blow up at finite time for wave equation in viscoelasticity: a new kind for one spatial variable Emden-Fowler type }

\author[L. K. Laouar, Kh. zennirand A. Guesmia \hfil  \hfilneg]
{Lakhdar Kassah Laouar, Khaled zennir and Amar Guesmia}

\address{Lakhdar Kassah Laouar \newline
	Departement de Mathematiquess, Universit\'e de Constantine 1, Algerie.}
\email{l.kassah@umc.edu.dz} 

\address{Khaled zennir \newline
	First address: Department of Mathematics, College of
	Sciences and Arts, Al-Ras, Qassim University, Kingdom of Saudi
	Arabia.\newline
	Second address:  Laboratory LAMAHIS, Department of mathematics, University 20 Ao\^ut 1955- Skikda, 21000, Algeria}
\email{k.zennir@qu.edu.sa} 
\address{Amar Guesmia \newline
	Laboratoire de LAMAHIS, Departement de mathematiques, Universit\'e 20 Ao\^ut 1955- Skikda, 21000, Algerie}
\email{guesmiasaid@yahoo.fr}
\subjclass[2010]{35B44,35D30, 35L05}
\keywords{Local solution; Viscoelastic, Emden-Fowler wave equation; blow-up}

\begin{abstract}  For one spatial variable, a new kind of nonlinear wave equation for Emden-Fowler type is considered with boundary value null and initial values. Under certain conditions on the initial data and the exponent $p$, we exhibit that the viscoelastic term leads our problem to be dissipative and the global solutions still non-exist in $L^2$ at given finite time.  
\end{abstract}

\maketitle
\numberwithin{equation}{section}
\newtheorem{theorem}{Theorem}[section]
\newtheorem{lemma}[theorem]{Lemma}
\allowdisplaybreaks

\section{Introduction  }
We  consider a new kind of Emden-Fowler type wave equation in viscoelasticity 
\begin{equation} \left\{
\begin{array}{ll}\label{e1}
t^2u''-  u_{xx} +\int_{1}^{t}\mu\left(s\right) u_{xx}\left(t-s\right) ds =u^p\quad \hbox{ in }[1,T)  \times(r_1,r_2),\\
u(1,x) =u_0(x) \in H^2(r_1,r_2)  \cap H_0^{1}(r_1,r_2),\\
u'(1,x) =u_1(x) \in H_0^{1}(r_1,r_2) 
\end{array}
\right.
\end{equation} 
where $p>1$, $r_1$ and $r_2$ are real numbers and the scalar function $\mu$ (so-called relaxation kernel) is assumed to only be $\mu: \mathbb{R}^+\rightarrow \mathbb{R}^+$ of $C^1$, nonincreasing and satisfying
 \begin{eqnarray}
 \mu(0)>0, 1-\int_{0}^{\infty}e^{s/2}\mu(s)ds=l>0.\label{2.1}
 \end{eqnarray} 
 The study of the Emden-Fowler equation originated from earlier theories concerning gaseous dynamics in astrophysics around the turn of the 20-th century. The fundamental problem in the study of stellar structure at that time was to study the equilibrium configuration of the mass of spherical clouds of gas. The Emden-Fowler equation has an impact on many astrophysics evolution phenomena. It has been poorly studied by scientists until now, essentially in the qualitative point of view. \\
Under the assumption
that the gaseous cloud is under convective equilibrium (first proposed in 1862
by Lord Kelvin \cite{T1}), Lane studied the equation
\begin{equation}
\frac{d}{dt}\Big(t^2\frac{du}{dt}\Big)  +t^2u^p=0, \label{e*}
\end{equation}
for the cases $p=1.5$ and $2.5$. Equation (\ref{e*}) is commonly referred
to as the Lane-Emden equation \cite{C1}. Astrophysicists
were interested in the behavior of the solutions of
(\ref{e*}) which satisfy the initial condition: $u(0)  =1$,
$u'(0)  =0$. Special cases of (\ref{e*}), namely, when
$p=1$ the explicit solution to
\[
\frac{d}{dt}\big(t^2\frac{du}{dt}\big)  +t^2u=0,\quad
u(0)  =1,\; u'(0)=0\]
is $u=\sin(t)/t$, and when $p=5$, the explicit solution to
\[
\frac{d}{dt}\big(t^2\frac{du}{dt}\big)  +t^2u^{5}=0,\quad
u(0)  =1,\; u'(0)=0
\]
is $u=1/\sqrt{1+t^2/3}$.\\
Many properties of solutions to the Lane-Emden equation were studied by Ritter
\cite{R2} in a series of 18 papers published during 1878-1889. The publication of
Emden's treatise Gaskugeln \cite{E1} marks the end of first epoch in the study of
stellar configurations governed by (\ref{e*}). The
mathematical foundation for the study of such an equation and also of the more
general equation
\begin{equation}
\frac{d}{dt}\Big(t^{\rho}\frac{du}{dt}\Big)  +t^{\sigma}u^{\gamma}=0,\quad
t\geq0, \label{e**}
\end{equation}
was made by   Fowler \cite{F1,F2,F3,F4} in a series of four papers during 1914-1931.\\ The first serious
study on the generalized Emden-Fowler equation
\[
\frac{d^2u}{dt^2}+a(t)  | u| ^{\gamma}\operatorname{sgn}u=0,\quad t\geq0,
\]
was made by Atkinson and \textit{al}. \\ Recently, M.-R. Li in \cite{L11} considered and studied the blow-up phenomena of solutions to
the Emden-Fowler type semilinear wave equation
$$
t^2u_{tt}-u_{xx}=u^p\quad \hbox{ in }[1,T)  \times(a,b)).
$$\\
The present research aims to extend the study of mden-Fowler type wave equation to the case when the viscoelastic term is injected in domain $[r_1,r_2]$ where there
is no result about this topic. Thus, a wider class of phenomena can be modeled. \\
The main results here are to exhibit the role of the viscoelasticity, which makes our problem \eqref{e1} dissipative, in the Blow up of solutions in $L^2$ at finite time given by $$\ln T_1^{\ast}, s.t. T_1^{\ast}=\frac{2}{p-1}T_1^{\ast}=\frac{2}{p-1}\Big( \int_{r_1}^{r_2} \vert u_0\vert dx\Big) \Big(\int_{r_1}^{r_2} u_0u_1dx\Big)^{-1},$$ for Emden-Fowler type wave equation when the energy is null which will be the main results of subsection 3.1. In the subsection 3.2, we will discuss the blow up in finite time $\ln T_2^{\ast}<\ln T_1^{\ast}$ of problem \eqref{e1} for large class of solution in the case when the associated energy is negative. The questions of local existence and uniqueness will be also considered in the section 2.\\
\section{Preliminaries, local Existence of unique solution}
 
Under some suitable transformations, we can get the local existence of solutions to equation (\ref{e1}). 
Taking the transform
$$\tau=\ln t, \qquad v   =u, \qquad u_{xx}=v_{xx},$$ then $$u'=t^{-1}v_{\tau},\qquad
t^2 u''=-v_{\tau}+v_{\tau\tau},$$ equation (\ref{e1}) takes the form
\begin{eqnarray} \label{e4}
&&v_{\tau\tau}- v_{xx} +\int_{0}^{\tau}\mu(s) v_{xx}(\tau-s) ds =v_{\tau}+v^p\quad \hbox{ in }[0,\ln T)  \times(r_1,r_2), \nonumber\\\nonumber\\
&&v(x,0) =u_0(x),\quad u_{\tau}(x,0) =u_1(x), \nonumber\\\nonumber\\
&& v(r_1,\tau) = v(r_2,\tau)=0.
\end{eqnarray}
Let
\begin{eqnarray}
&&v(\tau,x)  =e^{\tau/2}w(\tau,x),\nonumber \\\nonumber\\
&&v_{\tau}(\tau,x)=e^{\tau/2}w_{\tau}(\tau,x)+\frac{1}{2}e^{\tau/2}w(\tau,x),\nonumber\\\nonumber\\
&&v_{\tau\tau}(\tau,x)=e^{\tau/2}w_{\tau\tau}(\tau,x)+e^{\tau/2}w_{\tau}(\tau,x)+\frac{1}{4}e^{\tau/2}w(\tau,x),\nonumber
\end{eqnarray}
then (\ref{e4}) can be rewritten as
\begin{eqnarray}  
&&e^{\tau/2}w_{\tau\tau}-e^{\tau/2}w_{xx}+\int_{0}^{\tau}e^{s/2}\mu(s)w_{xx}(\tau-s)ds,\nonumber\\\nonumber\\
&& =\frac{1}{4}e^{\tau/2}w+e^{p\tau/2}w^p, \nonumber 
\end{eqnarray}
then
\begin{eqnarray} \label{e5}
w_{\tau\tau}- w_{xx} +e^{-\tau/2}\int_{0}^{\tau}e^{s/2}\mu(s) w_{xx}(\tau-s) ds =\frac{1}{4}w+e^{(p-1)\tau/2}w^p.
\end{eqnarray}
The following technical Lemma will play an important role.
\begin{lemma} \label{lemma0}
	For any $w\in C^{1}\left( 0,T,H^{1}(r_1,r_2 )\right)$ we have for any nonincreasing differentiable function $\alpha$ satisfying $\alpha(\tau)>0$
	\begin{eqnarray}
	&&\int_{r_1}^{r_2}\alpha(\tau)\int_{0}^{\tau}e^{s/2}\mu(\tau-s) w_{xx}(s) w'(\tau)dsdx\nonumber\\\nonumber\\
	&=&\frac{1}{2}\frac{d}{d\tau}\alpha(\tau)  \int_{0}^{\tau} e^{s/2}\mu(\tau-s)\int_{r_1}^{r_2} \vert w_x(\tau)-w_x(s)\vert^2dxds \nonumber\\\nonumber\\
	&-&\frac{1}{2}\frac{d}{d\tau}\alpha(\tau) \int_{0}^{\tau}e^{s/2}\mu(s)ds\int_{r_1}^{r_2} \left\vert w_{x}(\tau)\right\vert ^{2}dx  \nonumber\\\nonumber\\
	&-&\frac{1}{2}\alpha\int_{0}^{\tau} \Big(e^{s/2}\mu(\tau-s)\Big)'\int_{r_1}^{r_2} \vert w_x(\tau)-w_x(s)\vert^2dxds\nonumber\\\nonumber\\
	&+&\frac{1}{2}\alpha(\tau)e^{\tau/2}\mu(\tau)\int_{r_1}^{r_2} \left\vert w_{x}(\tau)\right\vert ^{2}dx \nonumber\\\nonumber\\
	&&-\frac{1}{2}\alpha'(\tau)\int_{0}^{\tau} e^{s/2}\mu(\tau-s)\int_{r_1}^{r_2} \vert w_x(\tau)-w_x(s)\vert^2dxds\nonumber\\ \nonumber\\
	&+&\frac{1}{2}\alpha'(\tau)\int_{0}^{s}e^{s/2}\mu(s)ds\int_{r_1}^{r_2} \left\vert w_{x}(\tau)\right\vert ^{2}dx.\nonumber
	\end{eqnarray}
\end{lemma}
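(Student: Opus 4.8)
The plan is to turn the left-hand side into total $\tau$-derivatives plus remainder terms by an integration by parts in $x$, followed by the classical viscoelastic splitting and the Leibniz differentiation rule. I take the prime to denote $\partial_\tau$, and I use that $w(\cdot,\tau)\in H^1_0(r_1,r_2)$ so that spatial boundary terms vanish.

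First I would integrate by parts in $x$. Since $w$ vanishes at $x=r_1,r_2$, so does $w_\tau$, and hence $\int_{r_1}^{r_2} w_{xx}(s)\,w'(\tau)\,dx = -\int_{r_1}^{r_2} w_x(s)\,w_{x\tau}(\tau)\,dx$. This replaces the second spatial derivative by a product of first derivatives, which is the quantity that can be recognised as a time derivative. Next comes the algebraic identity at the heart of the computation,
\[
-w_x(s)\,w_{x\tau}(\tau)=\tfrac12\,\partial_\tau\big|w_x(\tau)-w_x(s)\big|^2-\tfrac12\,\partial_\tau\big|w_x(\tau)\big|^2,
\]
obtained by expanding the square and using that $w_x(s)$ does not depend on $\tau$. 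Substituting this splits the integral into a \emph{difference} part weighted by $|w_x(\tau)-w_x(s)|^2$ and a \emph{diagonal} part weighted by $|w_x(\tau)|^2$.

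The decisive step is to pull $\partial_\tau$ out of the $s$-integral by Leibniz' rule, tracking three sources of extra terms: the moving endpoint $s=\tau$, the $\tau$-dependence of the kernel through $\mu(\tau-s)$, and the prefactor $\alpha(\tau)$. In the difference part the endpoint contribution at $s=\tau$ drops out, since $|w_x(\tau)-w_x(\tau)|^2=0$; differentiating the kernel yields the term with $\big(e^{s/2}\mu(\tau-s)\big)'$, and differentiating $\alpha$ yields the matching $\alpha'(\tau)$ term. In the diagonal part the $s$-integration factors the kernel out in front of $|w_x(\tau)|^2$, the endpoint at $s=\tau$ now survives and produces $\tfrac12\alpha(\tau)e^{\tau/2}\mu(\tau)\int_{r_1}^{r_2}|w_x(\tau)|^2\,dx$, while differentiating $\alpha$ and the kernel integral supply the remaining derivative and $\alpha'(\tau)$ contributions. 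Regrouping by sign reproduces the six terms of the statement.

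I expect the main obstacle to be the clean bookkeeping of these Leibniz boundary and kernel-derivative terms: one must confirm that the only surviving endpoint term is the $\tfrac12\alpha(\tau)e^{\tau/2}\mu(\tau)$ contribution (the difference-part endpoint cancelling through its vanishing integrand), and that the $\partial_\tau\alpha$ pieces coming from the two parts assemble into exactly the stated $\alpha'(\tau)$ terms with the correct signs. The monotonicity and positivity of $\alpha$ play no role in establishing the identity itself — they are invoked only later, when the lemma is used to sign the viscoelastic dissipation — so the proof is purely a differentiation-under-the-integral-sign calculation.
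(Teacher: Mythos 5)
Your proposal is correct and follows essentially the same route as the paper: integration by parts in $x$, the splitting of $-w_x(s)\,w_{x\tau}(\tau)$ into the difference term $\tfrac12\partial_\tau|w_x(\tau)-w_x(s)|^2$ and the diagonal term $-\tfrac12\partial_\tau|w_x(\tau)|^2$, and then Leibniz differentiation under the integral sign with the same bookkeeping of the vanishing endpoint in the difference part, the surviving $\tfrac12\alpha(\tau)e^{\tau/2}\mu(\tau)$ endpoint in the diagonal part, the kernel-derivative term, and the $\alpha'(\tau)$ terms. Your remark that the positivity and monotonicity of $\alpha$ are irrelevant to the identity itself is also accurate; nothing further is needed.
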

\begin{proof} It's not hard to see
	\begin{eqnarray}
	&&\int_{r_1}^{r_2}\alpha(\tau)\int_{0}^{\tau}e^{s/2}\mu(\tau-s) w_{xx}(s) w'(\tau)dsdx\nonumber\\\nonumber\\
	&=&-\alpha(\tau)\int_{0}^{\tau}e^{s/2}\mu(\tau-s)\int_{r_1}^{r_2}  w'_x(\tau) w_{x}(s)dxds \nonumber \\\nonumber\\
	&=&-\alpha(\tau)\int_{0}^{\tau}e^{s/2}\mu(\tau-s)\int_{r_1}^{r_2} w'_x(v)\left[   w_{x}(s)- w_{x}(\tau)\right] dxds \nonumber \\\nonumber\\
	&&-\alpha(\tau)\int_{0}^{\tau}e^{s/2}\mu(s)\int_{r_1}^{r_2}w'_x(\tau)  w_{x}(\tau)dxds.\nonumber
	\end{eqnarray}
	Consequently,
	\begin{eqnarray}
	&&\int_{r_1}^{r_2}\alpha(\tau)\int_{0}^{\tau}e^{s/2}\mu(\tau-s) w_{xx}(s) w'(\tau)dsdx\nonumber\\\nonumber\\
	&=&\frac{1}{2}\alpha(\tau)\int_{0}^{\tau}e^{s/2}\mu(\tau-s)\frac{d}{d\tau}\int_{r_1}^{r_2} \left\vert   w_{x}(s)- w_{x}(\tau)\right\vert ^{2}dxds \nonumber\\\nonumber\\
	&&-\alpha(\tau)\int_{0}^{\tau}e^{s/2}\mu(s)\left( \frac{d}{d\tau}\frac{1}{2}\int_{r_1}^{r_2} \left\vert w_{x}(\tau)\right\vert ^{2}dx\right) ds\nonumber
	\end{eqnarray}
	which implies,
	\begin{eqnarray}
	&&\int_{r_1}^{r_2}\alpha(\tau)\int_{0}^{\tau}e^{s/2}\mu(\tau-s) w_{xx}(s) w'(\tau)dsdx\nonumber\\\nonumber\\
	&=&\frac{1}{2}\frac{d}{d\tau}\left[\alpha(\tau) \int_{0}^{\tau}e^{s/2}\mu(\tau-s)\int_{r_1}^{r_2} \left\vert   w_{x}(s)- w_{x}(\tau)\right\vert ^{2}dxds\right] \nonumber\\\nonumber\\
	&&-\frac{1}{2}\frac{d}{d\tau}\left[\alpha(\tau)\int_{0}^{\tau}e^{s/2}\mu(s)\int_{r_1}^{r_2} \left\vert w_{x}(v)\right\vert ^{2}dxds\right] \nonumber\\\nonumber\\
	&&-\frac{1}{2}\alpha(\tau)\int_{0}^{\tau}\Big(e^{s/2}\mu(\tau-s)\Big)'\int_{r_1}^{r_2} \left\vert   w_{x}(s)- w_{x}(\tau)\right\vert ^{2}dxds \nonumber\\\nonumber\\
	&&+\frac{1}{2}\alpha(\tau)e^{\tau/2}\mu(\tau)\int_{r_1}^{r_2} \left\vert w_{x}(\tau)\right\vert ^{2}dx.\nonumber\\\nonumber\\
	&&-\frac{1}{2}\alpha'(\tau)e^{s/2}\mu(\tau-s)\int_{r_1}^{r_2} \left\vert   w_{x}(s)- w_{x}(\tau)\right\vert ^{2}dxds\nonumber\\\nonumber\\
	&&+\frac{1}{2}\alpha'(\tau)\int_{0}^{s}e^{s/2}\mu(s)ds\int_{r_1}^{r_2} \left\vert w_{x}(\tau)\right\vert ^{2}dxds.\nonumber
	\end{eqnarray}
	This completes the proof.
\end{proof}
We introduce the modified energy associated to problem (\ref{e5}).
\begin{eqnarray}
&&2E_{w}(\tau) = \int_{r_1}^{r_2} \vert w_{\tau}\vert^2dx+(1-\int_{0}^{\tau}e^{s/2}\mu(s)ds) \int_{r_1}^{r_2} \vert w_{x}\vert^2dxd \nonumber\\\nonumber\\
&&+\int_{0}^{\tau}e^{s/2}\mu(\tau-s)\int_{r_1}^{r_2} \left\vert   w_{x}(s)- w_{x}(\tau)\right\vert ^{2}dxds\nonumber\\\nonumber\\
&&-\frac{1}{4}\int_{r_1}^{r_2} \vert w\vert^2dx-\frac{2}{p+1}e^{\frac{(p-1)\tau}{2}}\int_{r_1}^{r_2}\vert w\vert^{p+1}dx.\label{energy}
\end{eqnarray} 
and 
\begin{eqnarray}
&&2E_{w}(0) = \int_{r_1}^{r_2} ( u_{1	}-\frac{1}{2}u_0)^2dx+\int_{r_1}^{r_2} \vert u_{0x}\vert^2dx\nonumber\\\nonumber\\
&&+\int_{r_1}^{r_2}u_{0}u_1dx-\frac{2}{p+1}\int_{r_1}^{r_2}\vert u_0\vert^{p+1}dx.\nonumber
\end{eqnarray} 
Direct differentiation, using (\ref{2.1}), (\ref{e5}), leads to
\begin{eqnarray}
E'_{w}(\tau )\leq 0.\nonumber
\end{eqnarray}
 We now can obtain the next important Lemma.
\begin{lemma} \label{lem1}
Suppose that $v\in C^{1}(0,T,H_0^{1}(r_1,r_2)  )  \cap C^2(0,T,L^2(r_1,r_2)  )$ is a solution of the semi-linear wave equation
(\ref{e5}). Then for $\tau\geq0$,
\begin{eqnarray}
 E_{w}(\tau) \leq E_{w}(0)  -\frac{p-1}{p+1}\int_0^{\tau} e^{\frac{(p-1)s}{2}}\int_{r_1}^{r_2}\vert w\vert^{p+1}dxds, \label{e6}
\end{eqnarray}
\end{lemma}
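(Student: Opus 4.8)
The plan is to derive the differential inequality $E_w'(\tau)\le -\tfrac{p-1}{p+1}e^{(p-1)\tau/2}\int_{r_1}^{r_2}|w|^{p+1}\,dx$ by the energy method and then integrate it over $[0,\tau]$. Concretely, I would differentiate the functional $E_w$ of (\ref{energy}) in $\tau$ and, in the resulting identity $\tfrac{d}{d\tau}\int_{r_1}^{r_2}|w_\tau|^2\,dx=2\int_{r_1}^{r_2}w_{\tau\tau}w_\tau\,dx$, substitute $w_{\tau\tau}$ from equation (\ref{e5}). The second-order spatial term integrates by parts, and since $w(r_1,\tau)=w(r_2,\tau)=0$ forces $w_\tau=0$ at the endpoints, no boundary term survives and $2\int_{r_1}^{r_2}w_{xx}w_\tau\,dx=-\tfrac{d}{d\tau}\int_{r_1}^{r_2}|w_x|^2\,dx$, which combines with the derivative of the elastic part of $E_w$ and with the memory contribution below; likewise the zeroth-order term $\tfrac14 w$ cancels the derivative of the $-\tfrac14\int_{r_1}^{r_2}|w|^2\,dx$ entry of $E_w$.

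The two $\tau$-dependent weights carry the whole argument. For the memory term I would apply Lemma \ref{lemma0} with $\alpha(\tau)=e^{-\tau/2}$, which is exactly the prefactor appearing in (\ref{e5}) and which reproduces the kernel $e^{s/2}\mu(\tau-s)$ of the history integral in (\ref{energy}); this rewrites $\int_{r_1}^{r_2}(\text{memory})\,w_\tau\,dx$ as the $\tau$-derivative of the history term and of the coefficient $\bigl(1-\int_0^\tau e^{s/2}\mu(s)\,ds\bigr)\int_{r_1}^{r_2}|w_x|^2\,dx$ of $E_w$, plus remainder terms proportional to $(e^{s/2}\mu(\tau-s))'$ and to $\alpha'(\tau)$. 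For the forcing I would use the splitting
\begin{equation}
\frac{d}{d\tau}\Bigl(\tfrac{2}{p+1}e^{(p-1)\tau/2}\!\int_{r_1}^{r_2}\!|w|^{p+1}\,dx\Bigr)=\tfrac{p-1}{p+1}e^{(p-1)\tau/2}\!\int_{r_1}^{r_2}\!|w|^{p+1}\,dx+2e^{(p-1)\tau/2}\!\int_{r_1}^{r_2}\!w^{p}w_\tau\,dx,\nonumber
\end{equation}
so that the derivative of the exponential weight produces precisely the coefficient $\tfrac{p-1}{p+1}$ of (\ref{e6}), while the remaining $2e^{(p-1)\tau/2}\int_{r_1}^{r_2}w^{p}w_\tau\,dx$ is exactly the contribution of the forcing in $2\int_{r_1}^{r_2}w_{\tau\tau}w_\tau\,dx$ after (\ref{e5}) is inserted, and the two cancel.

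Collecting every full $\tau$-derivative reconstructs $E_w'(\tau)$ by the very definition (\ref{energy}), leaving $E_w'(\tau)=-\tfrac{p-1}{p+1}e^{(p-1)\tau/2}\int_{r_1}^{r_2}|w|^{p+1}\,dx+R(\tau)$, where $R(\tau)$ gathers the remainder terms from Lemma \ref{lemma0}. The main obstacle is to show $R(\tau)\le 0$. Here I would invoke that $\mu$ is $C^1$ and nonincreasing, so $(e^{s/2}\mu(\tau-s))'=e^{s/2}\mu'(\tau-s)\le 0$ on the relevant range, and that $\alpha'(\tau)=-\tfrac12 e^{-\tau/2}<0$, which render the quadratic history remainders sign-definite with the favourable sign; the standing hypothesis (\ref{2.1}), i.e.\ $l=1-\int_0^\infty e^{s/2}\mu(s)\,ds>0$, guarantees that the coefficient of $\int_{r_1}^{r_2}|w_x|^2\,dx$ stays positive, so that $E_w$ is a genuine energy on $[0,\ln T)$.

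Granting $R(\tau)\le 0$, the differential inequality $E_w'(\tau)\le -\tfrac{p-1}{p+1}e^{(p-1)\tau/2}\int_{r_1}^{r_2}|w|^{p+1}\,dx$ holds for every $\tau\ge 0$, and integrating it over $[0,\tau]$ gives exactly (\ref{e6}). I expect the delicate point to be the precise sign bookkeeping of the memory remainders under the weight $e^{s/2}$, since the relaxation assumption in (\ref{2.1}) is imposed on $e^{s/2}\mu(s)$ rather than on $\mu$ alone; everything else is the routine multiplier computation.
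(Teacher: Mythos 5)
Your proposal is correct and follows essentially the same route as the paper's own proof: the paper likewise multiplies (\ref{e5}) by $w_\tau$ (equivalently, differentiates $E_w$ and substitutes the equation), invokes Lemma \ref{lemma0} with precisely $\alpha(\tau)=e^{-\tau/2}$ to convert the memory term into the history part of (\ref{energy}) plus remainders, splits the forcing derivative to extract the $\frac{p-1}{p+1}$ weighted term, and concludes by the sign conditions on $\mu$, $\alpha$ before integrating in $\tau$. Your explicit attention to the nonpositivity of the remainders involving $(e^{s/2}\mu(\tau-s))'$ and $\alpha'(\tau)$ is exactly the step the paper compresses into ``by conditions on $\mu,\alpha$ and (\ref{energy})''.
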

\begin{proof} Taking the $L^2$ product of (\ref{e5}) with $w_{\tau}$ yields
	\begin{eqnarray}
	&&\int_{r_1}^{r_2}w_{\tau\tau}w_{\tau}dx-\int_{r_1}^{r_2
	}\Big( w_{xx} -e^{-\tau/2}\int_{0}^{t}e^{s/2}\mu(s) w_{xx}(t-s) ds\Big) w_{\tau}dx\nonumber\\\nonumber\\
	&&=\frac{1}{4}\int_{r_1}^{r_2}w w_{\tau}dx+\int_{r_1}^{r_2} e^{(p-1)\tau/2}w^p w_{\tau}dx. \nonumber
	\end{eqnarray}
	Thus, by Lemma \ref{lemma0} with $\alpha(\tau)=e^{-\tau /2}$, we have  
	\begin{eqnarray}
	&&\frac{1}{2}\frac{d}{d\tau} \Big[\int_{r_1}^{r_2}\vert w_{\tau}\vert^2 dx+ (1-\int_{0}^{t}e^{s/2}\mu(s)ds) \int_{r_1}^{r_2} \vert   w_{x}\vert^2 dx \Big]\nonumber\\\nonumber\\  &&+\frac{1}{2}\frac{d}{d\tau}\int_{0}^{\tau}e^{s/2}\mu(\tau-s)\int_{r_1}^{r_2} \left\vert   w_{x}(s)- w_{x}(\tau)\right\vert ^{2}dxds\nonumber\\\nonumber\\
	&&=\frac{1}{8}\frac{d}{d\tau}  \int_{r_1}^{r_2}\vert w\vert ^2dx+\frac{1}{p+1}\frac{d}{d\tau}\int_{r_1}^{r_2} e^{(p-1)\tau/2}w^{p+1} w_{\tau}dx+\frac{2(p-1)}{p+1}\int_{r_1}^{r_2} e^{(p-1)\tau/2}w^{p+1}dx. \nonumber\\\nonumber\\ 
	&&+\frac{1}{2}\alpha(\tau)\int_{0}^{\tau}\Big(e^{s/2}\mu(\tau-s)\Big)'\int_{r_1}^{r_2} \left\vert   w_{x}(s)- w_{x}(\tau)\right\vert ^{2}dxds \nonumber\\ \nonumber\\
	&&-\frac{1}{2}\mu(\tau)\int_{r_1}^{r_2} \left\vert w_{x}(t)\right\vert ^{2}dx \nonumber\\\nonumber\\
	&&+\frac{1}{2}\alpha'(\tau)\int_{0}^{\tau}e^{s/2}\mu(\tau-s)\int_{r_1}^{r_2} \left\vert   w_{x}(s)- w_{x}(\tau)\right\vert ^{2}dxds\nonumber\\ \nonumber\\
	&&-\frac{1}{2}\alpha'(\tau)\int_{0}^{s}e^{s/2}\mu(s)ds\int_{r_1}^{r_2} \left\vert w_{x}(\tau)\right\vert ^{2}dx.\nonumber
	\end{eqnarray}
	Then, by conditions on $\mu, \alpha$ and (\ref{energy}), the assertions  (\ref{e6}) is proved.
\end{proof}
\section{Blow up result for $E_u(0)=0$}
Under small amplitude initial
data, we prove that $w$ blows up in $L^2$  at finite time $\ln T^{\ast}$ in the following Theorem \ref{thm2}.

\begin{theorem} \label{thm2}
Suppose that $w  \in C^{1}(0,T,H_0^{1}(r_1,r_2)  )  \cap C^2(0,T,L^2(r_1,r_2)  )$ is a weak solution of equation (\ref{e5}) with
$$e(0):=\int_{r_1}^{r_2}u_0u_1(x)  dx>0,\qquad E_u(0)=0$$
and $0<r_2-r_1\leq 1$.
Then there
exists $T_1^{\ast}$ such that
\[
\int_{r_1}^{r_2} \vert u(t,x)\vert ^2dx \to +\infty
\quad\hbox{ as } t\to T_1^{\ast},
\]
where $$ T_1^{\ast}=\frac{2}{p-1}\Big( \int_{r_1}^{r_2} \vert u_0\vert dx\Big) \Big(\int_{r_1}^{r_2} u_0u_1dx\Big)^{-1}.$$
\end{theorem}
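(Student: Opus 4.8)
The plan is to run a concavity (Levine-type) argument for the transformed problem (\ref{e5}), for which the energy machinery of Lemma \ref{lem1} is tailored, and then to read the blow-up back through $\tau=\ln t$, $v=e^{\tau/2}w=u$. First I would fix the blow-up functional
$$F(\tau)=\int_{r_1}^{r_2}w(\tau,x)^2\,dx,$$
which diverges exactly when $\int_{r_1}^{r_2}|u|^2\,dx$ does (the two differ only by the finite factor $e^{\tau}=t$). Its data are $F(0)=\int_{r_1}^{r_2}u_0^2\,dx$ and $F'(0)=2\int_{r_1}^{r_2}w(0)w_\tau(0)\,dx$, and the hypothesis $e(0)=\int_{r_1}^{r_2}u_0u_1\,dx>0$ is precisely what makes $F$ increasing at the initial time; this is the mechanism that will push $F$ to $+\infty$.

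Next I would differentiate twice,
$$F'(\tau)=2\int_{r_1}^{r_2}w\,w_\tau\,dx,\qquad F''(\tau)=2\int_{r_1}^{r_2}w_\tau^2\,dx+2\int_{r_1}^{r_2}w\,w_{\tau\tau}\,dx,$$
substitute $w_{\tau\tau}$ from (\ref{e5}), integrate by parts in $x$ using the Dirichlet condition (so $\int w\,w_{xx}=-\int w_x^2$), and reduce the convolution term by the identity of Lemma \ref{lemma0}. This expresses $F''$ through $\int w_\tau^2$, $-\int w_x^2$, the memory quadratic $\int_0^\tau e^{s/2}\mu(\tau-s)\int_{r_1}^{r_2}|w_x(\tau)-w_x(s)|^2\,dx\,ds$, the lower-order term $\tfrac14\int w^2$, and the source $e^{(p-1)\tau/2}\int w^{p+1}$. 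The heart of the estimate is to bound this source from below: rearranging the energy identity (\ref{energy}) and using $E_w(\tau)\le0$ — which follows from (\ref{e6}) and the hypothesis $E_u(0)=0$ — together with $1-\int_0^\tau e^{s/2}\mu(s)\,ds\ge l>0$ from (\ref{2.1}), I obtain
$$\frac{2}{p+1}\,e^{(p-1)\tau/2}\int_{r_1}^{r_2}|w|^{p+1}\,dx\ \ge\ \int_{r_1}^{r_2}w_\tau^2\,dx+l\int_{r_1}^{r_2}w_x^2\,dx+(\text{memory})-\tfrac14\int_{r_1}^{r_2}w^2\,dx.$$

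Feeding this into $F''$ and discarding the nonnegative memory contribution, the indefinite gradient terms cancel up to a controllable remainder and I expect an inequality of concavity type
$$F(\tau)F''(\tau)-\theta\big(F'(\tau)\big)^2\ \ge\ 0,\qquad \theta=\frac{p+3}{4}>1,$$
the Cauchy--Schwarz bound $(F')^2=4\big(\int w w_\tau\big)^2\le 4F\int w_\tau^2$ being what converts the $(p+3)\int w_\tau^2$ gained above into the required multiple of $(F')^2$, with $\theta>1$ holding precisely because $p>1$. Setting $y=F^{-(\theta-1)}=F^{-(p-1)/4}$ one has $y''\le0$, while $y>0$ and $y'(0)<0$ by $e(0)>0$; a positive, decreasing, concave function must reach $0$ in finite time, at $\tau^\ast=\frac{4}{p-1}\,F(0)/F'(0)$, where $F\to+\infty$. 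Reinstating $\tau=\ln t$ and the data $F(0)=\int u_0^2\,dx$, $F'(0)\propto\int u_0u_1\,dx$ recovers the asserted blow-up of $\int|u|^2\,dx$ at a finite time of the stated form $T_1^\ast=\frac{2}{p-1}\big(\int_{r_1}^{r_2}u_0^2\,dx\big)\big(\int_{r_1}^{r_2}u_0u_1\,dx\big)^{-1}$, the comparison $\int|u_0|\,dx\le(\int u_0^2\,dx)^{1/2}$ afforded by $0<r_2-r_1\le1$ being what links the two ways of writing the numerator.

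I expect the main difficulty to be the viscoelastic memory term. Via Lemma \ref{lemma0} with $\alpha(\tau)=e^{-\tau/2}$ one must show that its contributions to $F''$ either carry the right sign or are absorbed into the $l\int w_x^2$ gained from the source, and here the unusual normalisation $l=1-\int_0^\infty e^{s/2}\mu(s)\,ds>0$ in (\ref{2.1}) — rather than the classical $\int_0^\infty\mu<1$ — is exactly what keeps the coefficient of $\int w_x^2$ nonnegative once the exponential weight $e^{s/2}$ is paid for. A second delicate point is the lower-order term $\tfrac14\int w^2=\tfrac14 F$, which enters with an unfavourable sign and must be dominated by $F''$ on the relevant interval; controlling it, and verifying $y'(0)<0$, is where the smallness $0<r_2-r_1\le1$ and the sign of $e(0)$ are used. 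Finally, since $w$ is only a weak solution, I would justify the two differentiations of $F$ by a standard approximation in $C^1(0,T;H_0^1)\cap C^2(0,T;L^2)$.
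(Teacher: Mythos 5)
Your plan is essentially the paper's own proof. The paper runs exactly this Levine-type concavity argument on $A(s)=\int_{r_1}^{r_2}|w|^2dx$, with the same auxiliary function $J=A^{-k}$, $k=\frac{p-1}{4}$ (your $y=F^{-(p-1)/4}$), the same use of Lemma \ref{lemma0} with $\alpha(\tau)=e^{-\tau/2}$ to control the memory term, and the same Poincar\'e-type use of $0<r_2-r_1\le 1$ to absorb the unfavourable term $-\frac{p-1}{4}\int w^2$ into the gradient terms. The only structural difference is minor: where you lower-bound the source directly from $E_w(\tau)\le E_w(0)=0$, the paper first integrates the energy inequality (Lemma \ref{Le10}) and feeds the resulting estimate (\ref{A}) into $A''$; the two computations carry the same information.

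There is, however, a genuine gap at the endgame, which your proposal inherits from the paper but does not repair. Carried out rigorously, the concavity argument gives vanishing of $y$ at $\tau^\ast=\frac{4}{p-1}F(0)/F'(0)$, whose numerator involves $\int_{r_1}^{r_2}u_0^2\,dx$, not $\int_{r_1}^{r_2}|u_0|\,dx$ as in the statement of $T_1^\ast$; moreover, since $w=e^{-\tau/2}v$ gives $w_\tau(0)=u_1-\tfrac12u_0$, one actually has $F'(0)=2e(0)-\int_{r_1}^{r_2}u_0^2\,dx$, so $e(0)>0$ alone does not even guarantee $F'(0)>0$, a point both you and the paper gloss over. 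Your proposed bridge, $\int|u_0|\,dx\le\big(\int u_0^2\,dx\big)^{1/2}$, is an inequality between quantities of different homogeneity and cannot convert $\int u_0^2\,dx$ into $\int|u_0|\,dx$ in the blow-up time. The paper itself reaches the stated formula only through the invalid manipulation $A(0)^{-k}=\int_{r_1}^{r_2}|u_0|^{-(p-1)}dx$ (a power of an integral rewritten as an integral of a pointwise power), so the exact expression for $T_1^\ast$ is not legitimately derived there either. What your plan, made rigorous, actually proves is blow-up at the finite time $\frac{4}{p-1}\big(\int_{r_1}^{r_2}u_0^2\,dx\big)/F'(0)$ under the additional (implicit) hypothesis $F'(0)>0$; matching the theorem's literal formula for $T_1^\ast$ would require either changing the statement or an argument neither you nor the paper supplies.
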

We need to state and prove the next intermediate Lemma.
\begin{lemma}\label{Le10}
	Suppose that $w $ is a weak solution of equation (\ref{e5}). Then
	\begin{eqnarray} 
	&&  \int_{r_1}^{r_2}e^{\frac{p-1}{2}s}w^{p+1}(s,x) dx \nonumber\\\nonumber\\
	&&  \geq \frac{p+1}{2}\Big[\int_{r_1}^{r_2}
	\vert w_{s}\vert ^2dx+(1-\int_{0}^{t}e^{s/2}\mu(s)ds)\int_{r_1}^{r_2} \vert w_{x}\vert^2dx-\frac{1}{4}\int_{r_1}^{r_2}\vert w\vert ^2dx \Big] \nonumber\\\nonumber\\
	&&+\int_{0}^{\tau}e^{s/2}\mu(\tau-s)\int_{r_1}^{r_2} \left\vert   w_{x}(s)- w_{x}(\tau)\right\vert ^{2}dxds-(p+1)  E_{w}(
	0)  e^{\frac{p-1}{2}s} \nonumber\\\nonumber\\
	&&\quad  +\frac{p^2-1}{2}\int_0^{s}e^{\frac{p-1}{2}(s-r)  }
	\Big[\int_{r_1}^{r_2}
	\vert w_{s}\vert ^2dx+(1-\int_{0}^{t}e^{s/2}\mu(s)ds)\int_{r_1}^{r_2} \vert w_{x}\vert^2dx-\frac{1}{4}\int_{r_1}^{r_2}\vert w\vert ^2dx \Big]dr\nonumber\\\nonumber\\
	&&+\frac{p^2-1}{2}\int_0^{s}e^{\frac{p-1}{2}(s-r)  }\int_{0}^{\tau}e^{s/2}\mu(\tau-s)\int_{r_1}^{r_2} \left\vert   w_{x}(s)- w_{x}(\tau)\right\vert ^{2}dxds.\nonumber
	\end{eqnarray}
\end{lemma}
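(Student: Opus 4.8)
The plan is to recognise the asserted inequality as the explicit solution of a linear Gronwall-type inequality for the nonlinear quantity
\[
\Phi(\tau):=\int_{r_1}^{r_2}e^{(p-1)\tau/2}\,w^{p+1}(\tau,x)\,dx .
\]
The first step is purely algebraic: solving the definition \eqref{energy} of the modified energy for $\Phi$. Collecting the quadratic terms into
\[
\mathcal L(\tau):=\int_{r_1}^{r_2}|w_\tau|^2\,dx+\Big(1-\int_0^\tau e^{s/2}\mu(s)\,ds\Big)\int_{r_1}^{r_2}|w_x|^2\,dx+\int_0^\tau e^{s/2}\mu(\tau-s)\!\int_{r_1}^{r_2}|w_x(s)-w_x(\tau)|^2\,dx\,ds-\tfrac14\int_{r_1}^{r_2}|w|^2\,dx,
\]
equation \eqref{energy} reads $2E_w(\tau)=\mathcal L(\tau)-\tfrac{2}{p+1}\Phi(\tau)$, hence
\[
\Phi(\tau)=\tfrac{p+1}{2}\,\mathcal L(\tau)-(p+1)E_w(\tau).
\]
This already yields the $\tfrac{p+1}{2}$-weighted free term of the claim (the memory contribution here should carry the same weight $\tfrac{p+1}{2}$, as on the third line of the statement); it remains only to bound $-(p+1)E_w(\tau)$ from below.

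For that I would feed in Lemma \ref{lem1}. Setting $B(\tau):=-(p+1)E_w(\tau)$, the differential form of \eqref{e6} gives $B'(\tau)\ge (p-1)\Phi(\tau)$, and substituting the previous display $\Phi=\tfrac{p+1}{2}\mathcal L+B$ turns this into the linear differential inequality
\[
B'(\tau)-(p-1)B(\tau)\ge \tfrac{p^2-1}{2}\,\mathcal L(\tau),\qquad B(0)=-(p+1)E_w(0).
\]
Multiplying by the integrating factor $e^{-(p-1)\tau}$, integrating over $[0,\tau]$ and using the initial value produces
\[
B(\tau)\ge -(p+1)E_w(0)\,e^{(p-1)\tau}+\tfrac{p^2-1}{2}\int_0^\tau e^{(p-1)(\tau-r)}\,\mathcal L(r)\,dr,
\]
and inserting this back into $\Phi=\tfrac{p+1}{2}\mathcal L+B$ reproduces exactly the four-line structure of the lemma: the free term, the boundary term $-(p+1)E_w(0)e^{\cdots}$, and the convolution of $\mathcal L$ against the exponential kernel with the weight $\tfrac{p^2-1}{2}$ landing on both the bracketed density and the memory integral (lines four and five).

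The genuinely delicate points are, first, the constant and exponent bookkeeping — the rate in the kernel and in the boundary term must be kept consistent with the dissipation constant of Lemma \ref{lem1}, which is where the normalisation of the exponential rate between \eqref{e6} and the stated kernel $e^{\frac{p-1}{2}(s-r)}$ has to be matched — and, second, the justification of the differential form of \eqref{e6} at the assumed weak-solution regularity. For the latter I would avoid pointwise differentiation of the energy and instead work directly with the integral inequality \eqref{e6}: writing $\Psi(\tau)=\int_0^\tau\Phi$, \eqref{e6} gives $\Psi'(\tau)=\Phi(\tau)\ge \tfrac{p+1}{2}\mathcal L(\tau)-(p+1)E_w(0)+(p-1)\Psi(\tau)$, and the same integrating-factor computation applied to this integral inequality yields the conclusion without differentiating $E_w$. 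The monotonicity of $\mu$ and \eqref{2.1} are used only to guarantee that $\mathcal L$ and the memory integral are well defined and that the manipulations of Lemma \ref{lemma0} underlying \eqref{e6} are licit.
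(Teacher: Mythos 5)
Your method is structurally the same as the paper's. The paper sets $L(s)=\frac{1}{p+1}\int_0^s e^{\frac{p-1}{2}r}\int_{r_1}^{r_2}|w|^{p+1}dx\,dr$ and $F(s)$ equal to your $\mathcal L$, rewrites \eqref{e6} as the differential inequality \eqref{e8}, solves it with an integrating factor, and substitutes the resulting lower bound on $L$ back in to bound $L'\propto\Phi$ pointwise; your unknown $B=-(p+1)E_w$ (equivalently $\Psi=\int_0^\tau\Phi=(p+1)L$) is an affine rescaling of the paper's, so the two Gronwall arguments are the same. Your integral-form variant that avoids differentiating $E_w$ is a minor but genuine refinement over the paper, which implicitly differentiates the energy.

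However, the point you defer as ``bookkeeping to be matched'' cannot in fact be matched, and this is where the proposal falls short of the stated lemma. Your computation is forced: \eqref{energy} gives $\Phi=\frac{p+1}{2}F-(p+1)E_w(\tau)$, and \eqref{e6} \emph{as stated} gives $-(p+1)E_w'\ge(p-1)\Phi$; together these produce the rate $p-1$, i.e.\ $e^{(p-1)s}$ and kernel $e^{(p-1)(s-r)}$, not the stated $e^{\frac{p-1}{2}s}$ and $e^{\frac{p-1}{2}(s-r)}$. The paper only reaches the rate $\frac{p-1}{2}$ through \eqref{e8}, $E_w(0)\ge F-2L'+(p-1)L$, which is inconsistent with \eqref{energy}: since $E_w(\tau)=\frac12F(\tau)-L'(\tau)$, the consistent form is $E_w(0)\ge\frac12F-L'+(p-1)L$, whose integrating factor is again $e^{-(p-1)s}$ --- your rate. (Even granting \eqref{e8}, the paper's own substitution yields $\frac{p+1}{2}E_w(0)e^{\frac{p-1}{2}s}$ and $\frac{p^2-1}{4}$ on the convolution, not the stated $(p+1)E_w(0)$ and $\frac{p^2-1}{2}$.) The root cause sits in \eqref{e6} itself: redoing Lemma \ref{lem1}, the source term contributes $\frac{p-1}{2(p+1)}\Phi$, not $\frac{p-1}{p+1}\Phi$, to $-E_w'$, and with that corrected constant your own argument returns rate $\frac{p-1}{2}$ with convolution coefficient $\frac{p^2-1}{4}$. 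So what you prove is the valid consequence of the paper's stated premises (rate $p-1$, coefficient $\frac{p^2-1}{2}$), the stated lemma is a hybrid of two mutually inconsistent normalizations, and neither your proof nor the paper's establishes it literally. The discrepancy is harmless downstream --- in Theorem \ref{thm2} one has $E_w(0)=0$ and only the signs of the remaining terms are used (likewise your stronger weight $\frac{p+1}{2}$ on the nonnegative memory term implies the stated weight $1$) --- but a correct write-up must either adopt your exponents or first repair the constant in \eqref{e6}.
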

\begin{proof}
Set
\begin{eqnarray}
L(s)  &=&\frac{1}{p+1}\int_0^{s}
e^{\frac{p-1}{2}r}\int_{r_1}^{r_2}\vert w\vert^{p+1}dx dr,\nonumber\\\nonumber\\
 F(s)   &=&\int_{r_1}^{r_2}\vert w_{s}\vert^2dx+(1-\int_{0}^{s}e^{\tau/2}\mu(\tau)d\tau)\int_{r_1}^{r_2} \vert w_{x}\vert 
^2dx\nonumber \\\nonumber\\
&-&\frac{1}{4}\int_{r_1}^{r_2}\vert w\vert^2dx+\int_{0}^{\tau}e^{s/2}\mu(\tau-s)\int_{r_1}^{r_2}\left\vert   w_{x}(s)- w_{x}(\tau)\right\vert ^{2}dxds,\nonumber
\end{eqnarray}
By Lemma \ref{lemma0} and Lemma \ref{lem1}, equation (\ref{e6})  can be rewritten as
\begin{eqnarray}
E_{w}(0)\geq F-2L'+(p-1)  L, \label{e8}
\end{eqnarray}
therefore,
\begin{eqnarray}
(e^{\frac{p-1}{-2}s}L)  '
&=&e^{\frac{p-1}{-2}s}\Big(L'-\frac{p-1}{2}L\Big)\nonumber
\\\nonumber\\
&\geq&\frac{1}{2}e^{\frac{p-1}{-2}s}(F-E_{w}(0)  ), \nonumber
\end{eqnarray}
and
\begin{eqnarray}
e^{\frac{p-1}{-2}s}L
&  \geq&\frac{1}{2}\int_0^{s}e^{\frac{p-1}{-2}r}(
F(r)  -E_{w}(0)  )  dr\nonumber\\ \nonumber
\\
& \geq&\frac{1}{2}\int_0^{s}e^{\frac{p-1}{-2}r}F(r)
dr-\frac{E_{w}(0)  }{p-1}\Big(1-e^{\frac{p-1}{-2}s}\Big),\nonumber
\end{eqnarray}
and
\begin{eqnarray}
L\geq\frac{1}{2}\int_0^{s}e^{\frac{p-1}{2}(s-r)  }F(
r)  dr-\frac{E_{w}(0)  }{p-1}\Big(e^{\frac{p-1}{2}s}-1\Big);\nonumber
\end{eqnarray}
this implies
\begin{eqnarray}
&&  \frac{1}{p+1}\int_0^{s} e^{\frac{p-1}{2}r}\int_{r_1}^{r_2}
\vert w\vert ^{p+1}dx  \,dr\nonumber\\\nonumber\\
&& \geq\frac{1}{2}\int_0^{s}e^{\frac{p-1}{2}(s-r) }\Big[
 \int_{r_1}^{r_2}\vert w_{s}\vert^2dx+(1-\int_{0}^{t}e^{s/2}\mu(s)ds)\int_{r_1}^{r_2} \vert w_{x}\vert^2dx-\frac{1}{4}\int_{r_1}^{r_2}\vert w\vert^2dx \Big]\,dr\nonumber\\\nonumber\\
 &&-\frac{E_{w}(0)  }{p-1}(e^{\frac{p-1}{2}s}-1)+\frac{1}{2}\int_0^{s}e^{\frac{p-1}{2}(s-r) }\int_{0}^{\tau}e^{s/2}\mu(\tau-s)\int_{r_1}^{r_2} \left\vert   w_{x}(s)- w_{x}(\tau)\right\vert ^{2}dxds, \nonumber
\end{eqnarray}
and
\begin{eqnarray} \label{e9}
& & \int_0^{s}\int_{r_1}^{r_2}e^{\frac{p-1}{2}r}w^{p+1}(r,x)  \,dx\,dr\nonumber\\\nonumber\\
&&  \geq\frac{p+1}{2}\int_0^{s}e^{\frac{p-1}{2}(s-r)  }\Big[
\int_{r_1}^{r_2}\vert w_{s}\vert^2dx+(1-\int_{0}^{t}e^{s/2}\mu(s)ds)\int_{r_1}^{r_2} \vert w_{x}\vert^2dx-\frac{1}{4}\int_{r_1}^{r_2}\vert w\vert^2dx \Big]dr\nonumber\\\nonumber\\
&&  -\frac{p+1}{p-1}E_{w}(0)  (e^{\frac{p-1}{2}s}-1)+\frac{p+1}{2}\int_0^{s}e^{\frac{p-1}{2}(s-r) }\int_{0}^{\tau}e^{s/2}\mu(\tau-s)\int_{r_1}^{r_2} \left\vert   w_{x}(s)- w_{x}(\tau)\right\vert ^{2}dxds, \nonumber
\end{eqnarray}
and
\begin{eqnarray} 
&&  \int_{r_1}^{r_2}e^{\frac{p-1}{2}s}w^{p+1}(s,x) dx \nonumber\\\nonumber\\
&&  \geq\frac{p+1}{2}\Big[\int_{r_1}^{r_2}
\vert w_{s}\vert ^2dx+(1-\int_{0}^{t}e^{s/2}\mu(s)ds)\int_{r_1}^{r_2} \vert w_{x}\vert^2dx-\frac{1}{4}\int_{r_1}^{r_2}\vert w\vert ^2dx \Big] \nonumber\\\nonumber\\
&&+\int_{0}^{\tau}e^{s/2}\mu(\tau-s)\int_{r_1}^{r_2} \left\vert   w_{x}(s)- w_{x}(\tau)\right\vert ^{2}dxds-(p+1)  E_{w}(
0)  e^{\frac{p-1}{2}s} \nonumber\\\nonumber\\
&&\quad  +\frac{p^2-1}{2}\int_0^{s}e^{\frac{p-1}{2}(s-r)  }
\Big[\int_{r_1}^{r_2}
\vert w_{s}\vert ^2dx+(1-\int_{0}^{t}e^{s/2}\mu(s)ds)\int_{r_1}^{r_2} \vert w_{x}\vert^2dx-\frac{1}{4}\int_{r_1}^{r_2}\vert w\vert ^2dx \Big]dr\nonumber\\\nonumber\\
&&+\frac{p^2-1}{2}\int_0^{s}e^{\frac{p-1}{2}(s-r)  }\int_{0}^{\tau}e^{s/2}\mu(\tau-s)\int_{r_1}^{r_2} \left\vert   w_{x}(s)- w_{x}(\tau)\right\vert ^{2}dxds.\label{A}
\end{eqnarray}
This completes the proof.
\end{proof} 
We are now ready to prove Theorem \ref{thm2}
\begin{proof}(Of Theorem \ref{thm2})\\
Let $$A(s)  :=\int_{r_1}^{r_2}\vert w\vert ^2dx,$$
then we have $$A'(s)  =2\int_{r_1}^{r_2}ww_{s}(s,x)  dx.$$
and
\begin{eqnarray}
A''(s)
&=&2\int_{r_1}^{r_2}ww_{ss}(s,x)  dx+2\int_{r_1}^{r_2}w^2_{s}(s,x)  dx\nonumber\\\nonumber\\
&=&2\int_{r_1}^{r_2}(ww_{xx}-we^{-\tau/2}\int_{0}^{t}e^{s/2}\mu(s)w_{xx}(t-s)ds+\frac{1}{4}w^2+w_{s}^2
+e^{\frac{p-1}{2}s}w^{p+1})  dx\nonumber\\\nonumber\\
& =&2\int_{r_1}^{r_2}(-w_{x}^2+w_xe^{-\tau/2}\int_{0}^{t}e^{s/2}\mu(s)w_{x}(t-s)ds+\frac{1}{4}w^2+w_{s}
^2+e^{\frac{p-1}{2}s}w^{p+1})dx.\nonumber
\end{eqnarray}
By Lemma\ref{lemma0}, Lemmad\ref{Le10} and (\ref{A}), then
\begin{eqnarray}
  A''(s) 
&\geq&2 \Big((\int_0^te^{s/2}\mu(s)ds-1)\int_{r_1}^{r_2} \vert w_{x}\vert^2dx +\frac{1}{4}\int_{r_1}^{r_2}\vert w\vert^2dx+\int_{r_1}^{r_2}\vert w_{s}\vert^2dx\Big) \nonumber\\ \nonumber\\
&-&2\int_{0}^{\tau}e^{s/2}\mu(\tau-s)\int_{r_1}^{r_2} \left\vert   w_{x}(s)- w_{x}(\tau)\right\vert ^{2}dxds\nonumber\\ \nonumber\\
&+&(p+1)2 \Big((\int_0^te^{s/2}\mu(s)ds-1)\int_{r_1}^{r_2} \vert w_{x}\vert^2dx +\frac{1}{4}\int_{r_1}^{r_2}\vert w\vert^2dx+\int_{r_1}^{r_2}\vert w_{s}\vert^2dx\Big) \nonumber\\ \nonumber\\
&-&(p+1)\int_{0}^{\tau}e^{s/2}\mu(\tau-s)\int_{r_1}^{r_2} \left\vert   w_{x}(s)- w_{x}(\tau)\right\vert ^{2}dxds\nonumber \\\nonumber  \\
&+&(p^2-1)  \int_0^{s}e^{\frac{p-1}{2}(s-r)} \Big((\int_0^te^{s/2}\mu(s)ds-1)\int_{r_1}^{r_2} \vert w_{x}\vert^2dx +\frac{1}{4}\int_{r_1}^{r_2}\vert w\vert^2dx+\int_{r_1}^{r_2}\vert w_{s}\vert^2dx\Big) \nonumber\\ \nonumber\\
&-&(p^2-1)  \int_0^{s}e^{\frac{p-1}{2}(s-r)}\int_{0}^{\tau}e^{s/2}\mu(\tau-s)\int_{r_1}^{r_2} \left\vert   w_{x}(s)- w_{x}(\tau)\right\vert ^{2}dxds
 \nonumber \\\nonumber  \\
&-&2(p+1)  E_{w}(0) e^{\frac{p-1}{2}s} \nonumber  \\\nonumber  \\
&  \geq& \big[(p+3)  \int_{r_1}^{r_2} \vert w_{s}\vert^2dx+(
p-1)(1-\int_0^te^{s/2}\mu(s)ds)  \int_{r_1}^{r_2} \vert w_{x}\vert^2dx-\frac{p-1}{4}\int_{r_1}^{r_2}\vert w\vert^2dx\big]  \nonumber\\\nonumber\\
&-&2(p+1)  E_{w}(0)  e^{\frac{p-1}{2}s} +(p-1)\int_{0}^{\tau}e^{s/2}\mu(\tau-s)\int_{r_1}^{r_2} \left\vert   w_{x}(s)- w_{x}(\tau)\right\vert ^{2}dxds\\\nonumber\\
\label{e11}
&+&(p^2-1)  \int_0^{s}e^{\frac{p-1}{2}(s-r)
} \Big(\int_{r_1}^{r_2}\vert w_{s}\vert^2dx+(\int_0^te^{s/2}\mu(s)ds-1)\int_{r_1}^{r_2} \vert w_{x}\vert^2dx+\frac{1}{4}\int_{r_1}^{r_2}\vert w\vert^2dx\Big) \,dr\nonumber\\ \nonumber\\
&-&(p^2-1)  \int_0^{s}e^{\frac{p-1}{2}(s-r)
}\int_{0}^{\tau}e^{s/2}\mu(\tau-s)\int_{r_1}^{r_2} \left\vert   w_{x}(s)- w_{x}(\tau)\right\vert ^{2}dxdsdr.
\nonumber
\end{eqnarray}
As in \cite{L11}, let us setting $$J(s)  :=A(s)  ^{-k}, \qquad k=\frac{p-1} {4}>0.$$
Then  $$J'(s)  =-kA(s)  ^{-k-1}A'(s),$$
and
\begin{eqnarray} \label{e12}
J''(s)   &  =-kA(s)  ^{-k-2}[A(s)  A''(s)  -(k+1) A'(s)  ^2] \nonumber\\\nonumber\\
&  \leq-kA(s)  ^{-k-1}\big[A''(s)
-4(k+1)  \int_{r_1}^{r_2}w_{s}^2  dx\big].
\end{eqnarray}
Since $E_{u}(0)=0$, we have
\begin{eqnarray}
& & A''(s)  -4(k+1)  \int_{r_1}^{r_2}\vert w_{s}\vert ^2dx \nonumber\\\nonumber\\
&&  \geq\Big[(p+3) \int_{r_1}^{r_2}\vert w_{s}\vert^2dx+(
p-1)(1-\int_0^te^{s/2}\mu(s)ds) \int_{r_1}^{r_2} \vert w_{x}\vert^2dx-\frac{p-1}{4}\int_{r_1}^{r_2}\vert w\vert^2dx\Big]   \nonumber\\\nonumber\\
&&  +(p^2-1)  \int_0^{s}e^{\frac{p-1}{2}(s-r)
} \Big(\int_{r_1}^{r_2}\vert w_{s}\vert^2dx+(1-\int_0^te^{s/2}\mu(s)ds)\int_{r_1}^{r_2} \vert w_{x}\vert^2dx-\frac{1}{4}\int_{r_1}^{r_2}\vert w\vert^2dx\Big) \,dr\nonumber\\\nonumber\\
& &\quad -4(k+1)  \int_{r_1}^{r_2}\vert w_{s}\vert^2dx+(p-1)\int_{0}^{\tau}e^{s/2}\mu(\tau-s)\int_{r_1}^{r_2} \left\vert   w_{x}(s)- w_{x}(\tau)\right\vert ^{2}dxds\nonumber\\
&&+(p^2-1)  \int_0^{s}e^{\frac{p-1}{2}(s-r)
}\int_{0}^{\tau}e^{s/2}\mu(\tau-s)\int_{r_1}^{r_2} \left\vert   w_{x}(s)- w_{x}(\tau)\right\vert ^{2}dxdsdr,\nonumber
\end{eqnarray}
then,
\begin{eqnarray}
&&  A''(s)  -4(k+1)  \int_{r_1}^{r_2}\vert w_{s}\vert^2dx\nonumber\\\nonumber\\
&& \geq(p-1)  \Big[(1-\int_0^te^{s/2}\mu(s)ds)\int_{r_1}^{r_2} \vert w_{x}\vert^2dx-\frac{1}{4}\int_{r_1}^{r_2}\vert w\vert^2dx\Big]  \nonumber\\\nonumber\\
&&+(p-1)\int_{0}^{\tau}e^{s/2}\mu(\tau-s)\int_{r_1}^{r_2} \left\vert   w_{x}(s)- w_{x}(\tau)\right\vert ^{2}dxds \nonumber\\
& & +(p^2-1)  \int_0^{s}e^{\frac{p-1}{2}(s-r)}
\Big(\int_{r_1}^{r_2}\vert w_{s}\vert^2dx+(1-\int_0^te^{s/2}\mu(s)ds)\int_{r_1}^{r_2} \vert w_{x}\vert^2dx-\frac{1}{4}\int_{r_1}^{r_2}\vert w\vert^2dx\Big) dr\nonumber\\\nonumber\\
&&+(p^2-1)  \int_0^{s}e^{\frac{p-1}{2}(s-r)}\int_{0}^{\tau}e^{s/2}\mu(\tau-s)\int_{r_1}^{r_2} \left\vert   w_{x}(s)- w_{x}(\tau)\right\vert ^{2}dxdsdr \nonumber\\ \nonumber\\
& & \geq(p-1)  \big(1-(r_2-r_1)  ^2\big)
\Big(\int_{r_1}^{r_2} \vert w_{x}\vert^2dx+\int_{0}^{\tau}e^{s/2}\mu(\tau-s)\int_{r_1}^{r_2} \left\vert   w_{x}(s)- w_{x}(\tau)\right\vert ^{2}dxds\Big)\nonumber\\\nonumber\\
&&+(p+1)  \int_0^{s}e^{\frac{p-1}{2}(s-r)  }
\Big(\int_{r_1}^{r_2} \vert w_{s}\vert ^2dx+(1-\int_0^te^{s/2}\mu(s)ds)\int_{r_1}^{r_2} \vert w_{x}\vert ^2dx\Big)dr \nonumber\\
&&  +(p+1)  \int_0^{s}e^{\frac{p-1}{2}(s-r)  }\int_{0}^{\tau}e^{s/2}\mu(\tau-s)\int_{r_1}^{r_2} \left\vert   w_{x}(s)- w_{x}(\tau)\right\vert ^{2}dxdsdr>0,\nonumber
\end{eqnarray}
where $r_2\leq1+r_1$.\\
Therefore, by (\ref{e12}) we obtain that for,
 $r_2 -r_1\leq1$, $J''(s)  <0$ for all $s\geq0$.\\
\begin{eqnarray}
J'(s)   \leq J'(0)  &=&-\frac
{p-1}{4}A(0)  ^{-\frac{p+3}{4}}A'(0)\nonumber\\\nonumber\\
&=&-\frac{p-1}{2}e(0)\int_{r_1}^{r_2}\vert u_0\vert ^{-(p+3)}dx,\nonumber
\end{eqnarray}
and
\begin{eqnarray}
J(s)     &\leq& J(0)  -\frac{p-1}{2}e(0)\int_{r_1}^{r_2}\vert u_0\vert ^{-(p+3)}dxs\nonumber\\ \nonumber\\
&=&\int_{r_1}^{r_2}\| u_0\| ^{-(p-1)}dx-\frac{p-1}{2}e(0)\int_{r_1}^{r_2}\vert u_0\vert ^{-(p+3)}dxs\nonumber\\ \nonumber\\
  &=&\int_{r_1}^{r_2}\vert u_0\vert ^{-(p+3)}dx\Big(\int_{r_1}^{r_2}\vert u_0\vert dx-\frac{p-1}{2}e(0) s\Big). \nonumber
\end{eqnarray}
Then
\begin{eqnarray}
J(s)  \to 0 \quad \hbox{ as  }s\to T^{\ast}=\frac{2}{p-1}\frac{\int_{r_1}^{r_2}\vert u_0\vert dx}{e(0)}.
\end{eqnarray}
Thus $w$ solution of (\ref{e5}) blows up in $L^2$  at finite time $T^{\ast}$.
\end{proof}
\section{Blow up result for $E_u(0)<0$}
In the following theorem we shall state and prove our second blowing up result
\begin{theorem} \label{thm3}
Suppose that $w\in C^{1}(0,T,H_0^{1}(r_1,r_2)  )  \cap C^2(0,T,L^2(r_1,r_2)  )$ is a weak solution of equation (\ref{e1}) with
$$e(0)=\int_{r_1}^{r_2}u_0 u_1(x)  dx>0,\qquad E_u(0)<0,$$ and $0<r_2-r_1\leq1$.
Then, there exists $T_2^{\ast}$ such that
\[
\frac{1}{\int_{r_1}^{r_2}\vert u(t,x)\vert ^{2}dx} 
\to0\quad\hbox{ as }  t\to\ln T_2^{\ast}.
\]
Further, we have
$\ln T_2^{\ast}<\ln T_1^{\ast}$, and the estimate
\[
\int_{r_1}^{r_2}w^2   dx  \geq \int_{r_1}^{r_2}u^2_0 dx  -2E_{u}(0)  \frac
{p+1}{p-1}\big[se^{\frac{p-1}{2}s}-\frac{2}{p-1}(e^{\frac{p-1}{2}
s}-1)  \big].
\] 
\end{theorem}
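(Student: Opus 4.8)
The plan is to reuse the Levine-type concavity scheme from the proof of Theorem~\ref{thm2} with the \emph{same} functional $A(s):=\int_{r_1}^{r_2}w^2\,dx$. The crucial observation is that the derivation of the differential inequality \eqref{e11} for $A''(s)$ never invoked the sign of $E_w(0)$, so \eqref{e11} is available verbatim. Now that $E_w(0)<0$, the forcing term $-2(p+1)E_w(0)e^{\frac{p-1}{2}s}$ is strictly positive and grows exponentially, while every term discarded in Theorem~\ref{thm2} remains nonnegative under $0<r_2-r_1\le1$ and the hypotheses \eqref{2.1} on $\mu$. Hence, setting $k=\frac{p-1}{4}$ and $J(s):=A(s)^{-k}$, inequality \eqref{e12} again yields $J''(s)<0$, and since $e(0)=\int_{r_1}^{r_2}u_0u_1\,dx>0$ we have $J'(0)=-\tfrac{p-1}{2}e(0)A(0)^{-k-1}<0$. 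Concavity together with $J(0)>0$ forces $J$ to vanish at a finite $s=\ln T_2^{\ast}$, i.e.\ $\int_{r_1}^{r_2}|u|^2\,dx\to+\infty$, which is the asserted blow-up.

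For the quantitative estimate I would integrate a reduced form of \eqref{e11} twice in $s$. Discarding the nonnegative instantaneous quadratic terms and the nonnegative memory (convolution) terms — precisely those shown to be positive in Theorem~\ref{thm2} — leaves a lower bound for $A''(s)$ driven by the energy term $-2(p+1)E_w(0)e^{\frac{p-1}{2}s}$ together with the iterated convolution $\frac{p^2-1}{2}\int_0^s e^{\frac{p-1}{2}(s-r)}(\cdots)\,dr$; it is exactly this exponential kernel that manufactures the $s\,e^{\frac{p-1}{2}s}$ growth. Integrating once from $0$ produces $A'(s)\ge A'(0)+(\text{positive})$, and integrating again with $A(0)=\int_{r_1}^{r_2}u_0^2\,dx$ while discarding the harmless term $A'(0)s\ge0$ gives the claimed bound $\int_{r_1}^{r_2}w^2\,dx\ge\int_{r_1}^{r_2}u_0^2\,dx-2E_u(0)\frac{p+1}{p-1}\big[s\,e^{\frac{p-1}{2}s}-\frac{2}{p-1}(e^{\frac{p-1}{2}s}-1)\big]$.

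Finally, to obtain $\ln T_2^{\ast}<\ln T_1^{\ast}$ I would re-run the last step of Theorem~\ref{thm2} while retaining the forcing term rather than merely using $J''<0$. I expect this comparison to be the main obstacle: the bare sign $J''(s)<0$ only reproduces the tangent-line bound $J(s)\le J(0)+J'(0)s$ and hence the \emph{same} time $T_1^{\ast}$, so the strict improvement must come from the \emph{magnitude} of $-2(p+1)E_w(0)e^{\frac{p-1}{2}s}$. I would therefore control $A$ from above on the compact interval $[0,\ln T_1^{\ast}]$, convert the exponential forcing into a strictly negative upper bound for $J''$ there, and integrate twice to push the zero of the resulting majorant strictly below $\ln T_1^{\ast}$, strictness being guaranteed by $E_w(0)<0$. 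The remaining delicate point is the sign bookkeeping of the memory terms needed to keep the reduced inequality valid throughout this passage from the qualitative sign of $J''$ to a quantitative gain in the blow-up time.
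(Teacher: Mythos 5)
Your first paragraph (the blow-up itself) is essentially the paper's argument: inequality \eqref{e11} holds independently of the sign of $E_w(0)$, and the concavity of $J(s)=A(s)^{-k}$ with $J'(0)<0$ forces $J$ to vanish in finite time. That part is fine. The genuine gap is in your second paragraph, i.e.\ the derivation of the stated estimate. Your proposed mechanism cannot produce it. The iterated convolution term $\frac{p^2-1}{2}\int_0^s e^{\frac{p-1}{2}(s-r)}(\cdots)\,dr$ in \eqref{e11} involves only $\int w_s^2$, $\int w_x^2$, $\int w^2$ and the memory differences; it carries no information whatsoever about $E_u(0)$, and in both the paper's proofs and your own blow-up step it is handled purely as a nonnegative quantity to be discarded, so it cannot ``manufacture'' a term proportional to $E_u(0)\,s\,e^{\frac{p-1}{2}s}$. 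What your direct double integration actually yields, after dropping all nonnegative terms and keeping $A''(s)\ge -2(p+1)E_w(0)e^{\frac{p-1}{2}s}$, is a lower bound with the bracket $\frac{2}{p-1}\big(e^{\frac{p-1}{2}s}-1\big)-s$ (equivalently $e^{\frac{p-1}{2}s}-1-\frac{p-1}{2}s$ up to constants). This is a \emph{different} function from the asserted bracket $s\,e^{\frac{p-1}{2}s}-\frac{2}{p-1}\big(e^{\frac{p-1}{2}s}-1\big)$: neither dominates the other for all $s>0$ (yours is larger for small $s$, smaller for large $s$), so the theorem's estimate does not follow from your computation.

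The missing idea is the paper's inequality \eqref{e13}, which you never form: when passing from $A''$ to $J''$ via \eqref{e12}, one keeps the forcing term rather than only its sign, obtaining $J''(s)\le 2k(p+1)\,E_u(0)\,e^{\frac{p-1}{2}s}J(s)^{1+\frac1k}=\frac{p^2-1}{2}E_u(0)\,e^{\frac{p-1}{2}s}J(s)^{1+\frac1k}<0$ with $k=\frac{p-1}{4}$. The whole quantitative part is then run at the level of $J$, not $A$: integrate once, using that $J$ is nonincreasing (so $J(r)^{1+\frac1k}\ge J(s)^{1+\frac1k}$ for $r\le s$, the inequality reversing upon multiplication by $E_u(0)<0$) to freeze $J(s)^{1+\frac1k}$ inside the integral --- this freezing step, together with dropping $J'(0)\le 0$, is precisely what creates the factor $s$; then use the identity $J^{-1-\frac1k}J'=-k\big(J^{-\frac1k}\big)'=-k\big(\int_{r_1}^{r_2}w^2\,dx\big)'$ to convert back to $A$, and integrate once more, where $\int_0^s r\,e^{\frac{p-1}{2}r}\,dr=\frac{2}{p-1}\big[s\,e^{\frac{p-1}{2}s}-\frac{2}{p-1}\big(e^{\frac{p-1}{2}s}-1\big)\big]$ produces exactly the stated bracket. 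The same inequality \eqref{e13} also resolves what you correctly flagged as your main obstacle, the strict comparison $\ln T_2^{\ast}<\ln T_1^{\ast}$: since its right-hand side is strictly negative, $J(s)<J(0)+J'(0)s$ for $s>0$, so $J$ vanishes strictly before the tangent-line time of Theorem \ref{thm2}, with no need for the compactness/majorant construction you sketch. In short, your outline proves blow-up, but neither the estimate nor the strict ordering of the blow-up times; both hinge on \eqref{e13}, which is absent from your proposal.
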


\begin{proof}
By (\ref{e11}), Lemma\ref{lemma0}, $E_{u}(0)  <0$, $e(0)>0$ and $0<r_2-r_1\leq1$, we have \\
\begin{eqnarray} \label{e13}
J''(s) &\leq&-k\Big(\int_{r_1}^{r_2}w^2   dx\Big) ^{-k-1}\Big[A''(s)
-(p+3)  \int_{r_1}^{r_2}w_{s}^2(s,x) dx\Big] \nonumber\\\nonumber\\
&  =&-k\Big(\int_{r_1}^{r_2}w^2   dx\Big) ^{-k-1}\Big[
-2(p+1)  E_{w}(0)  e^{\frac{p-1}{2}s}\nonumber\\\nonumber\\
&+&(p-1)   \Big((1-\int_{0}^{t}e^{s/2}\mu(s)ds)\int_{r_1}^{r_2} \vert w_{x}\vert ^2dx-\frac{1}{4}\int_{r_1}^{r_2}\vert w\vert^2dx\Big)   \nonumber\\\nonumber\\
&+&(p-1)\int_{0}^{\tau}e^{s/2}\mu(\tau-s)\int_{r_1}^{r_2} \left\vert   w_{x}(s)- w_{x}(\tau)\right\vert ^{2}dxds \nonumber\\\nonumber\\
&+&(p^2-1)  \int_0^{s}e^{\frac{p-1}{2}(s-r)  }
 \Big(\int_{r_1}^{r_2}\vert w_{s}\vert^2dx+(1-\int_{0}^{t}e^{s/2}\mu(s)ds)\int_{r_1}^{r_2} \vert w_{x}\vert^2dx-\frac{1}{4}\int_{r_1}^{r_2}\vert w\vert^2dx\Big) dr \Big] \nonumber\\\nonumber\\
 &+&(p^2-1)  \int_0^{s}e^{\frac{p-1}{2}(s-r)  }\int_{0}^{\tau}e^{s/2}\mu(\tau-s)\int_{r_1}^{r_2} \left\vert   w_{x}(s)- w_{x}(\tau)\right\vert ^{2}dxdsdr\nonumber\\\nonumber\\
&  \leq& 2k(p+1)  E_{u}(0)  e^{\frac{p-1}{2}s}J(s)  ^{1+\frac{1}{k}}<0,
\end{eqnarray}
where $k=(p-1)  /4$, we can obtain the same conclusions as in
Theorem \ref{thm2}.\\
By the inequality (\ref{e13}) and $J'<0$ we can estimate
$J$ further,
\begin{eqnarray}
J''(s)
&\leq&2k(p+1)  E_{u}(0)  e^{\frac{p-1}{2}s}J(s)  ^{1+\frac{1}{k}} \nonumber\\\nonumber\\
&=&\frac{1}{2}(p^2-1)  E_{u}(0)  e^{\frac{p-1}{2}s}J(s)  ^{1+\frac{1}{k}}<0, \nonumber
\end{eqnarray}
and
\begin{eqnarray}
J'(s)  &\leq& J'(0)  +\frac{s}
{2}(p^2-1)  E_{u}(0)  e^{\frac{p-1}{2}s}J(
s)  ^{1+\frac{1}{k}}\nonumber\\\nonumber\\
&\leq&\frac{s}{2}(p^2-1)  E_{w}(
0)  e^{\frac{p-1}{2}s}J(s)  ^{1+\frac{1}{k}},\nonumber
\end{eqnarray}
and
\begin{eqnarray}
-k\big(J(s)  ^{-\frac{1}{k}}\big)'
&=&J(s)  ^{-1-\frac{1}{k}}J'(s)  \nonumber\\\nonumber\\
&\leq&\frac{E_{u}(0)  }{2}(p^2-1)  se^{\frac{p-1}{2}s},\nonumber
\end{eqnarray}
and
\begin{eqnarray}
-k(J(s)  ^{-\frac{1}{k}}-J(0)  ^{-\frac{1}{k}})
&  \leq&\frac{E_{u}(0)  }{2}(p^2-1)
 \Big(\frac{2}{p-1}se^{\frac{p-1}{2}s}-(\frac{2}{p-1}) ^2(e^{\frac{p-1}{2}s}-1) \Big) \nonumber
 \\\nonumber\\
& =&E_{w}(0) (p+1) \big[se^{\frac{p-1}{2}
s}-\frac{2}{p-1}(e^{\frac{p-1}{2}s}-1)  \big],\nonumber
\end{eqnarray}
which implies
\[
 \int_{r_1}^{r_2}w^2   dx \geq  \int_{r_1}^{r_2}u_0^2   dx  -2\frac{p+1}{p-1}E_{u}(
0)  \big[se^{\frac{p-1}{2}s}-\frac{2}{p-1}(e^{\frac{p-1}{2}
s}-1)  \big]   
\]Then $u$ solution of our initial problem (\ref{e1}) blows up in
$L^2$ \ at finite time $\ln T_2^{\ast}$. This completes the proof. 
\end{proof}

\end{document}